\date{}
\renewcommand{\uppercasenonmath}[1]{}
\theoremstyle{plain}
\newtheorem{theorem}{Theorem}[section]
\newtheorem{proposition}[theorem]{Proposition}
\newtheorem{lemma}[theorem]{Lemma}
\newtheorem{example}[theorem]{Example}
\newtheorem{definition}[theorem]{Definition}
\theoremstyle{definition}
\newtheorem*{acknowledgement}{Acknowledgement}
\theoremstyle{remark}
\newtheorem{remark}[theorem]{Remark}
\newcommand{\C}{\mathcal{C}}
\newcommand{\Tor}{\mbox{\rm Tor}}
\newcommand{\A}{\mathcal{A}}
\newcommand{\prodi}{\prod_{i\in \Lambda}}
\newcommand{\Id}{\mathrm{Id}}
\def\Hom{{\rm Hom}}
\def\Ext{{\rm Ext}}
\def\Tor{{\rm Tor}}
\def\Ker{{\rm Ker}}
\def\Im{{\rm Im}}
\def\Coker{{\rm Coker}}
\def\End{{\rm End}}
\def\Id{{\rm Id}}
\begin{document}
\begin{center}
{\large  \bf Some remarks on  $u$-$S$-Noetherian and $u$-$S$-coherent rings}

\vspace{0.5cm} Xiaolei Zhang, Wei Qi
\bigskip

{\footnotesize
	School of Mathematics and Statistics,  Tianshui Normal University,   741001 Tianshui, China\\
}
\end{center}

\bigskip
\centerline { \bf  Abstract}
\bigskip
\leftskip10truemm \rightskip10truemm \noindent

 In this paper, we give some new characterizations of $u$-$S$-Noetherian rings  and $u$-$S$-coherent rings in terms of uniform $S$-versions of  injective precovers, flat preenvelopes  and absolutely pure modules. Moreover, we give a negative answer to a question proposed by Bouziri.
\vbox to 0.3cm{}\\
{\it Key Words:}    $u$-$S$-Noetherian ring, $u$-$S$-coherent ring, $u$-$S$-absolutely pure module, $u$-$S$-precover, $u$-$S$-preenvelope.\\
{\it 2020 Mathematics Subject Classification:}  13C11.

\leftskip0truemm \rightskip0truemm
\bigskip

\section{Introduction}
Throughout this paper, $R$ is always  a commutative ring with an identity and $S$ is always a multiplicative subset of $R$, that is, $1\in S$ and $s_1s_2\in S$ for any $s_1\in S$ and any $s_2\in S$. Let $M$ be an $R$-module. Denote by $M^+=\Hom_{\mathbb{Z}}(M,\mathbb{Q}/\mathbb{Z})$.

In 2002, Anderson \cite{ad02} introduced the notions of $S$-finite modules and $S$-Noetherian rings which can be seen as $S$-versions of finitely generated modules and Noetherian rings, respectively.  An $R$-module $M$ is said to be \emph{$S$-finite} (with respect to $s$) provided that there is a finitely generated submodule $F$ of $M$ such that $sM\subseteq F$ for some $s\in S$;  and a ring $R$ is called an \emph{$S$-Noetherian ring} if every ideal of $R$ is $S$-finite.  In 2018, Bennis and Hajoui \cite{bh18} introduced the notions of $S$-finitely presented modules and $S$-coherent rings which can be seen as $S$-versions of finitely presented modules and coherent rings, respectively. An  $R$-module $M$ is called $S$-finitely presented
provided that there exists an exact sequence of $R$-modules $0\rightarrow K\rightarrow F\rightarrow M\rightarrow 0$ with $K$ $S$-finite and $F$ finitely generated free. A ring $R$ is called an \emph{$S$-coherent ring} if $R$ itself is an $S$-coherent $R$-module, that is, every finitely generated ideal of $R$ is $S$-finitely presented.

However, the selected $s$ in the definitions of $S$-Noetherian rings and $S$-coherent rings is not uniform on ideals. So Chen et al. \cite{CKQWZ} introduced the notion of $u$-$S$-Noetherian rings. A ring $R$ is called a {\it $u$-$S$-Noetherian ring} (abbreviates \emph{uniformly  $S$-Noetherian ring}) provided that there exists  $s\in S$ such that  any ideal $I$ of $R$ is  $S$-finite with respect to $s$. Subsequently, Zhang \cite{Zuscoh-24} introduced the notion of uniformly $S$-coherent rings.  An  $R$-module $M$ is said to be
\emph{$u$-$S$-finitely presented} (abbreviates \emph{uniformly  $S$-finitely presented}) (with respect to $s$) provided that there is an exact sequence $$0\rightarrow T_1\rightarrow F\xrightarrow{f} M\rightarrow T_2\rightarrow 0$$ with $F$ finitely presented and $sT_1=sT_2=0$. A ring  $R$ is called a
\emph{$u$-$S$-coherent ring} (abbreviates uniformly  $S$-coherent ring) (with respect to $s$) provided that there exists $s\in S$ such that  any finitely generated ideal of $R$ is $u$-$S$-finitely presented with respect to $s$.  It was proved in \cite[Proposition 3.10]{Zuscoh-24} that every $u$-$S$-Noetherian ring is $u$-$S$-coherent.

It is well-known that a ring $R$ is Noetherian if and only if any direct sum  of injective $R$-modules is injective,  if and only if every $R$-module has a injective precover. A ring $R$ is coherent if and only if  any pure quotient module of an absolutely pure module is  absolutely pure, if and only if any direct limit of absolutely pure modules is absolutely pure, if and only if the class of flat modules is preenveloping. The main aim of this paper is to give uniform $S$-versions of several classical results.

Since the paper involves uniformly torsion theory, we give a quick review (see  \cite{z21} for more details). An $R$-module $T$ is called  $u$-$S$-torsion (with respect to $s$) provided that there exists  $s\in S$ such that $sT=0$. 
An $R$-sequence $ A\xrightarrow{f} B\xrightarrow{g} C$ is said to be \emph{$u$-$S$-exact at $B$} (with respect to $s$) if $s\Ker(g)\subseteq \Im(f)$ and $s\Im(f)\subseteq \Ker(g)$ for some $s\in S$. An $R$-sequence $\cdots\rightarrow A_{n-1}\xrightarrow{f_n} A_{n}\xrightarrow{f_{n+1}} A_{n+1}\rightarrow\cdots$ is called a \emph{$u$-$S$-exact sequence} if it is $u$-$S$-exact at each $A_n$. An $R$-homomorphism $f:M\rightarrow N$ is a \emph{$u$-$S$-monomorphism} $($resp.,   \emph{$u$-$S$-epimorphism}, \emph{$u$-$S$-isomorphism}$)$  (with respect to $s$) provided $0\rightarrow M\xrightarrow{f} N$   $($resp., $M\xrightarrow{f} N\rightarrow 0$, $0\rightarrow M\xrightarrow{f} N\rightarrow 0$ $)$ is  $u$-$S$-exact  (with respect to $s$).
Let $M$ and $N$ be $R$-modules. We say that $M$ is $u$-$S$-isomorphic to $N$ if there exists a $u$-$S$-isomorphism $f:M\rightarrow N$. If so, it was proved in \cite[Lemma 1.2]{ZQusproj} that there is $s\in S$  and a $u$-$S$-isomorphism $f':N\rightarrow M$ such that $f\circ f' = s \Id_N$ and $f' \circ f = s \Id_{M}.$

\section{Characterizing $u$-$S$-coherent rings by $u$-$S$-flat modules}

Let $R$ be a ring, $S$ be a multiplicative subset of $R$ and $s\in S$.
Recall from \cite{z21} that an $R$-module $F$ is said to be \emph{$u$-$S$-flat} (abbreviates \emph{uniformly $S$-flat}) (with respect to $s$) provided that for any  exact sequence $0\rightarrow A\rightarrow B\rightarrow C\rightarrow 0$, the induced sequence $0\rightarrow A\otimes_RF\rightarrow B\otimes_RF\rightarrow C\otimes_RF\rightarrow 0$ is  $u$-$S$-exact (with respect to $s$). It follows by \cite[Proposition 4.3]{Zuscoh-24} that an $R$-module $F$ is $u$-$S$-flat if and only if  there exists  $s\in S$ such that $\Tor_1^R(R/I,F)$ is $u$-$S$-torsion with respect to $s$ for any ideal $I$ of $R$ provided that $S$ is  regular, i.e., every element in $S$ is a nonzero-divisor.

It  follows from \cite[Theorem 4.4]{Zuscoh-24} that  a ring $R$   is a $u$-$S$-coherent ring if and only if   there is $s\in S$ such that any direct product of flat modules (projective modules, copies of $R$) is $u$-$S$-flat with respect to $s$. It is well-known that a ring $R$ is coherent if and only if every $R$-module has a flat preenvelope. Now, we will try to give the uniform $S$-version of this result.

First, we introduce a uniform $S$-version of the (pre)enveloping properties for a class of modules.
\begin{definition} Let $M$ be an $R$-module and $\C$ a class of $R$-modules.
	\begin{enumerate}
		\item An $R$-homomorphism $f\in \Hom_R(M,C)$ with $C\in\C$ is called a  $\C$  $u$-$S$-preenvelope of $M$ $($with respect to $s)$ provided that, for any $C'\in \C$,  $\Hom_R(f,C'):\Hom_R(C,C')\rightarrow \Hom_R(M,C')$ is a $u$-$S$-epimorphism  $($with respect to $s)$.
		
		\item A $\C$ $u$-$S$-preenvelope $f\in \Hom_R(M,C)$ of $M$ is called a $\C$ $u$-$S$-envelope $($with respect to $s)$ of $M$, provided that, for any $\alpha\in \End_R(C)$, $f=\alpha\circ f $ implies that $\alpha$ is a $u$-$S$-isomorphism $($with respect to $s)$.
		
		\item If every $R$-module has a  $\C$  $u$-$S$-preenvelope (resp., $u$-$S$-envelope), then $\C$ is called a $u$-$S$-preenveloping (resp., $u$-$S$-enveloping) class.
	\end{enumerate}
\end{definition}

Recall from \cite{ZQusproj} that  a short $u$-$S$-exact sequence  $\xi: 0\rightarrow A\xrightarrow{f} B\xrightarrow{g} C\rightarrow 0$   is said to be \emph{ $u$-$S$-split} (with respect to $s$) provided that there is  $s\in S$ and $R$-homomorphism $f':B\rightarrow A$ such that $f'(f(a))=sa$ for any $a\in A$, that is, $f'\circ f=s\Id_A$.

\begin{lemma}\label{ussplit-usflat}
	Let $\xi:0\rightarrow A\rightarrow B\rightarrow C\rightarrow 0$ be a $u$-$S$-split  short $u$-$S$-exact sequence. Suppose $B$ is $u$-$S$-flat. Then so is $A$ and $C$.
\end{lemma}
\begin{proof} 	Let $\xi:0\rightarrow A\rightarrow B\rightarrow C\rightarrow 0$ be a $u$-$S$-split  short $u$-$S$-exact sequence  with $B$  $u$-$S$-flat. It follows by \cite[Proposition 2.3]{Z23usapm} that $\xi$ is $u$-$S$-pure. Hence $C$ is $u$-$S$-flat by   \cite[Proposition 2.5]{Z23usapm}. By the proof of \cite[Lemma 2.4]{ZQusproj}, there is  a $u$-$S$-split  short $u$-$S$-exact sequence
$\xi':0\rightarrow C\rightarrow B\rightarrow A\rightarrow 0$ induced by $\xi$.	Hence $A$ is also $u$-$S$-flat.	
\end{proof}

 For any $s\in S$, let $s\mathcal{F}$ denote the class of all $R$-modules that are $u$-$S$-flat with respect to $s$. It was proved in \cite{Bourzri26} that if there exists $s\in S$ such that every $R$-module has an $s\mathcal{F}$-preenvelope, then $R$ is $u$-$S$-coherent. However, the converse is still  open (see \cite[Question 2.12]{Bourzri26}). Next, we will give a new characterization of $u$-$S$-coherent rings in terms of uniform $S$-version of preenvelopes under a mild condition.

 \begin{theorem}\label{usflatpreenvelope}
 	Let $R$ be a ring, $S$ be a regular  multiplicative subset of $R$. Consider the following statements:
 	\begin{enumerate}
 		\item $R$ is a $u$-$S$-coherent ring;
 		\item for all $s\in S$, any direct product of $R$-modules in   $s\mathcal{F}$ is $u$-$S$-flat;
 		
 	\item   any direct product of flat $R$-modules $($projective $R$-modules, copies of $R)$ is $u$-$S$-flat;
 		\item   any $R$-module has a $u$-$S$-flat $u$-$S$-preenvelope.	
 	\end{enumerate}
 Then $(1)\Leftrightarrow (2)\Leftrightarrow (3)\Leftarrow (4)$. If, moreover, every $u$-$S$-flat module is $u$-$S$-isomorphic  to a flat module, then $(3)\Rightarrow (4)$.
 \end{theorem}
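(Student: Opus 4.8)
The plan is to prove the three equivalences $(1)\Leftrightarrow(2)\Leftrightarrow(3)$ first and then add the two implications involving $(4)$. The equivalence $(1)\Leftrightarrow(3)$ requires no new argument: it is precisely \cite[Theorem 4.4]{Zuscoh-24}, recalled just before the statement. For $(2)\Rightarrow(3)$, a flat module --- hence also a projective module, hence also a copy of $R$ --- is $u$-$S$-flat with respect to \emph{every} $s\in S$; so a direct product of flat modules is a direct product of members of $s\mathcal{F}$ for whichever $s$ we choose, and $(2)$ makes it $u$-$S$-flat. Thus the first genuine step is $(1)\Rightarrow(2)$.

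For $(1)\Rightarrow(2)$, assume $R$ is $u$-$S$-coherent with respect to $s_0$, fix $s\in S$, and fix a family $\{F_i\}_{i\in\Lambda}$ with each $F_i$ $u$-$S$-flat with respect to $s$; by \cite[Proposition 4.3]{Zuscoh-24} (here $S$ is regular) this says $s\Tor_1^R(R/I,F_i)=0$ for every ideal $I$ and every $i$. Because $\Tor_1^R$ commutes with direct limits in its first variable and every ideal is the directed union of its finitely generated subideals, it suffices to produce a single $s'\in S$ killing $\Tor_1^R\big(R/J,\prod_{i\in\Lambda}F_i\big)$ for \emph{all} finitely generated $J$; such an $s'$ then kills $\Tor_1^R\big(R/I,\prod_{i\in\Lambda}F_i\big)$ for every ideal $I$, so $\prod_{i\in\Lambda}F_i$ is $u$-$S$-flat by \cite[Proposition 4.3]{Zuscoh-24}. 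The crucial input is that $u$-$S$-coherence makes each such $R/J$ a $u$-$S$-finitely presented module with a bound depending only on $s_0$, and that for a $u$-$S$-finitely presented module $N$ the canonical comparison map
$$\Tor_1^R\Big(N,\prod_{i\in\Lambda}F_i\Big)\longrightarrow\prod_{i\in\Lambda}\Tor_1^R(N,F_i)$$
is a $u$-$S$-isomorphism with a bound $s_0^{k}$ depending only on the $u$-$S$-presentation of $N$; this is the $u$-$S$-analogue of ``$\Tor_1$ commutes with products against a finitely presented first argument over a coherent ring'', obtained by taking a $u$-$S$-resolution of $N$ by finitely generated free modules (available uniformly over a $u$-$S$-coherent ring) and tensoring with $\prod_{i\in\Lambda}F_i$, using that finitely generated frees commute with products. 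Taking $N=R/J$, the target is killed by $s$, hence $s_0^{k}s$ kills $\Tor_1^R\big(R/J,\prod_{i\in\Lambda}F_i\big)$ uniformly in $J$, and we are done. I expect this step --- pinning down the uniform $u$-$S$-commutation of $\Tor_1$ with direct products over a $u$-$S$-coherent ring --- to be the main obstacle.

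For $(4)\Rightarrow(3)$, let $P=\prod_{i\in\Lambda}F_i$ with each $F_i$ flat (the cases of projective modules and of copies of $R$ are special cases). Since $S$ is regular, multiplication by any $s\in S$ is injective on $R$, hence on each flat $F_i$, hence on $P$. Let $f\colon P\to G$ be a $u$-$S$-flat $u$-$S$-preenvelope of $P$ given by $(4)$, say with preenvelope bound $s_1$ and with $G$ $u$-$S$-flat with respect to $s_2$. For each $j$ the projection $\pi_j\colon P\to F_j$ has flat, hence $u$-$S$-flat, target, so there is $g_j\colon G\to F_j$ with $g_j\circ f=s_1\pi_j$; assembling these gives $g\colon G\to P$ with $g\circ f=s_1\Id_P$. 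As $s_1$ is injective on $P$, $f$ is injective, so $0\to P\xrightarrow{f}G\to\coker f\to 0$ is a short exact sequence that is $u$-$S$-split with respect to $s_1$. Putting $s:=s_1s_2$, it is a $u$-$S$-split $u$-$S$-short exact sequence with respect to $s$ whose middle term $G$ is $u$-$S$-flat with respect to $s$, so Lemma \ref{ussplit-usflat} gives that $P$ is $u$-$S$-flat, i.e.\ $(3)$.

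Finally, for $(3)\Rightarrow(4)$ under the additional hypothesis, fix an $R$-module $M$ and set $\lambda=\max(|M|,|R|,\aleph_0)$. A pure submodule of a flat module is flat, and any subset of a module of cardinality at most $\lambda$ lies in a pure submodule of cardinality at most $\lambda$; hence every homomorphism from $M$ into a flat module factors through a flat module of cardinality at most $\lambda$. Choose a set $\{F_t\}_{t\in T}$ of representatives of the flat modules of cardinality at most $\lambda$ and put $P:=\prod_{t\in T}\,\prod_{h\in\Hom_R(M,F_t)}F_t$, with the canonical map $e\colon M\to P$ whose $(t,h)$-component is $h$. By $(3)$, $P$ is $u$-$S$-flat. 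To see that $e$ is a $u$-$S$-flat $u$-$S$-preenvelope, take a $u$-$S$-flat module $G$ and $\phi\colon M\to G$; by the hypothesis there are a flat module $G_0$ and $u$-$S$-isomorphisms $\alpha\colon G\to G_0$, $\beta\colon G_0\to G$ with $\beta\circ\alpha=s_0\Id_G$ (cf.\ \cite[Lemma 1.2]{ZQusproj}). Factor $\alpha\circ\phi\colon M\to G_0$ as $\gamma\circ g$ with $g\colon M\to F_{t_0}$ one of the indexing maps and $\gamma\colon F_{t_0}\to G_0$; since $g=\pi_{t_0,g}\circ e$, we get $s_0\phi=(\beta\circ\gamma\circ\pi_{t_0,g})\circ e\in\Im\big(\Hom_R(e,G)\big)$, so $\Hom_R(e,G)$ is a $u$-$S$-epimorphism and $e$ is the desired $u$-$S$-flat $u$-$S$-preenvelope. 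The one delicate point is that $s_0$ a priori depends on $G$; one removes it by choosing the $u$-$S$-isomorphisms in the hypothesis with a uniform bound, which is the intended reading of that hypothesis.
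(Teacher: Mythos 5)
Your overall architecture matches the paper's: the cycle $(1)\Rightarrow(2)\Rightarrow(3)\Rightarrow(1)$ plus $(4)\Rightarrow(3)$ via $u$-$S$-splitting and Lemma \ref{ussplit-usflat}, and $(3)\Rightarrow(4)$ via the cardinality/purity bound of \cite[Lemma 5.3.12]{EJ11} and the extra hypothesis. Your $(4)\Rightarrow(3)$ and $(3)\Rightarrow(4)$ are essentially the paper's arguments, and the uniformity worry you raise at the end of $(3)\Rightarrow(4)$ (that the bound $s_0$ of the $u$-$S$-isomorphism $G\cong G_0$ a priori depends on $G$) is a real issue that the paper's own proof also elides; reading the hypothesis uniformly is indeed the only way the definition of $u$-$S$-preenvelope is satisfied. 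One smaller caveat: you cannot quote $(3)\Rightarrow(1)$ verbatim from \cite[Theorem 4.4]{Zuscoh-24}, because statement $(3)$ here does not fix a single $s$ working for all products (the paper's own remark flags exactly this discrepancy); the paper therefore reproves it by taking $\Lambda=\mathrm{Card}(L)$ and showing that $u$-$S$-flatness of $R^{\Lambda}$ forces the syzygy $L$ to be $S$-finite. That argument goes through with an $s$ depending only on $\Lambda$, so the non-uniform $(3)$ still suffices, but you need to say so.

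The genuine gap is $(1)\Rightarrow(2)$: you correctly identify the needed lemma --- that for a $u$-$S$-finitely presented $N$ the comparison map $\Tor_1^R(N,\prod_i F_i)\to\prod_i\Tor_1^R(N,F_i)$ has uniformly $S$-torsion kernel --- but you do not prove it, and the bound you assert (a power of $s_0$ ``depending only on the $u$-$S$-presentation of $N$'') is not what the argument actually yields. The paper's proof takes the presentation $0\to T'\to K\to I\to T\to 0$ with $K$ finitely presented and $s'T=s'T'=0$, uses that $K\otimes_R\prod_iF_i\cong\prod_i(K\otimes_RF_i)$, and chases two diagrams; in the second chase, to kill the element $\beta(b)$ one must know that $\prod_i\Ker(f_2\otimes\Id_{F_i})$ is $u$-$S$-torsion, and that uses the $u$-$S$-flatness of each $F_i$ with respect to the common $s$. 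So the kernel of $\theta\colon I\otimes_R\prod_iF_i\to\prod_i(I\otimes_RF_i)$ is only bounded by $s^2s'$, i.e.\ the defect of the comparison map genuinely depends on the family $\{F_i\}$ and not just on $N$. The final bound $s^2s'$ is still independent of $I$, which is all that is needed, but as stated your key lemma is false and the correct statement requires the hypothesis that the family lies in $s\mathcal{F}$. Your reduction to finitely generated ideals via direct limits is valid and a mild simplification over the paper, which works with arbitrary ideals directly; but until the comparison-map estimate is actually carried out, the central implication of the theorem remains unproved.
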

 \begin{proof}
 	
$(2)\Rightarrow  (3)$ Trivial.
 	
 $(1)\Rightarrow (2)$ Suppose $R$ is $u$-$S$-coherent with respect to $s'\in S$.    Let $\{F_i\mid i\in \Lambda\}$ be a family of $u$-$S$-flat $R$-modules with respect to $s$ and $I$ a finitely generated ideal of $R$. Then $I$ is $u$-$S$-finitely presented with respect to $s'$. So we have an exact sequence $0\rightarrow T'\xrightarrow{t'} K\xrightarrow{f} I\xrightarrow{t} T\rightarrow 0$ with $K$ finitely presented and $s'T=s'T'=0$. Set $\Im(f)=K'$, $f_1:K\rightarrow K'$ and $f_2:K'\rightarrow I$ to be the natural homomorphisms.
 	
 	 Consider the following commutative diagram with exact rows:
 	$$\xymatrix{
 		& T'\otimes_R \prodi F_i\ar[d]_{\alpha}\ar[r]^{t'\otimes \Id_{\prod F_i}} & K\otimes_R  \prodi F_i \ar[d]_{\gamma}^{\cong}\ar[r]^{f_1\otimes \Id_{\prod F_i}} & K'\otimes_R  \prodi F_i \ar[d]^{\beta}\ar[r]^{} &  0\\
 X \ar@{>->}[r]^{} & \prodi (T'\otimes_R F_i )\ar[r]^{\prod t'\otimes \Id_{ F_i}} &\prodi (K\otimes_R F_i ) \ar[r]^{\prod f_1\otimes \Id_{ F_i}} & \prodi (K'\otimes_R F_i) \ar[r]^{} &  0,\\}$$
	By  \cite[Theorem 2.6.11]{FK24}, $\gamma$ is an isomorphism.
 Since each $T'\otimes_R F_i$ is $u$-$S$-torsion with respect to $s'$, $\prodi (T'\otimes_R F_i )$ and hence $X$ is also $u$-$S$-torsion with respect to $s'$. We claim that $\Ker(\beta)$ is a $u$-$S$-torsion with respect to $s'$. Indeed, let $x\in \Ker(\beta)$. Then there is $y\in K\otimes_R  \prodi F_i$ such that $(\prod f_1\otimes \Id_{ F_i})\circ\gamma (y)=0,$ and $f_1\otimes \Id_{\prod  F_i} (y)=x.$ Hence, there is $z\in \Im(\prod t'\otimes \Id_{F_i})$ such that $\gamma(y)=z$. So $s'x=s'f_1\otimes \Id_{\prod  F_i} (y)=f_1\otimes\Id_{\prod  F_i} (s'y)=$ $f_1\otimes\Id_{\prod  F_i} \gamma^{-1}\gamma(s'y)=f_1\otimes\Id_{\prod  F_i} \gamma^{-1}s'\gamma(y)=f_1\otimes\Id_{\prod  F_i} \gamma^{-1}(s'z)=f_1\otimes\Id_{\prod  F_i} \gamma^{-1}(0)=0$. The claim holds.

Consider the following commutative diagram:
 $$\xymatrix{
 	& K'\otimes_R \prodi F_i\ar@{->>}[d]_{\beta}\ar[r]^{f_2\otimes \Id_{\prod F_i}} & I\otimes_R  \prodi F_i \ar[d]^{\theta}\ar[r]^{t\otimes \Id_{\prod F_i}} & T\otimes_R  \prodi F_i \ar[d]^{\delta}\ar[r]^{} &  0\\
\prodi  X_i \ar@{>->}[r]^{} & \prodi (K'\otimes_R F_i )\ar[r]^{\prod f_2\otimes \Id_{F_i}} &\prodi (I\otimes_R F_i ) \ar[r]^{\prod t\otimes \Id_{F_i}} & \prodi (T\otimes_R F_i) \ar[r]^{} &  0,\\}$$	
where $X_i:=\Ker(f_2\otimes \Id_{F_i}).$ We claim that $\Ker(\theta)$ is also  $u$-$S$-torsion  with respect to $s(s')^2$. Indeed,  let $a\in\Ker(\theta)$. Since $T\otimes_R  \prodi F_i$ is $u$-$S$-torsion  with respect to $s'$, we have $t\otimes \Id_{\prod F_i}(s'a)=s'(t\otimes \Id_{\prod F_i}(a))=0$. So there $b\in K'\otimes_R \prodi F_i$ such that $f_2\otimes \Id_{\prod F_i}(b)=s'a.$ Then $(\prod f_2\otimes \Id_{F_i})\circ\beta(b)=\theta\circ(f_2\otimes \Id_{\prod F_i})(b)=\theta(s'a)=s'\theta(a)=0$.
  Since each $F_i$ is $u$-$S$-flat with respect to $s$, we have $\prodi X_i$ is $u$-$S$-torsion with respect to $s$. Hence $\beta(sb)=s\beta(b)=0$. Since $\Ker(\beta)$ is  $u$-$S$-torsion with respect to $s'$, we have $ss'b=0$. Consequently, $s(s')^2a=ss'f_2\otimes \Id_{\prod F_i}(b)= f_2\otimes \Id_{\prod F_i}(ss'b)=0.$
 	
 	Now we consider the  following commutative diagram with exact rows:
 	$$\xymatrix{
 		0\ar[r]& \Tor_1^R(R/I,\prodi F_i)\ar[d]_{}\ar[r]^{} & I\otimes_R \prodi F_i \ar[d]^{\theta}\ar[r]^{} & R\otimes_R  \prodi F_i \ar[d]^{}\\
 		 & \prodi\Tor_1^R(R/I, F_i)\ar[r]^{} &\prodi (I\otimes_R F_i ) \ar[r]^{} & \prodi (R\otimes_R F_i),\\}$$
Since each $F_i$ is $u$-$S$-flat with respect to $s$, $\prodi\Tor_1^R(R/I, F_i)$ is $u$-$S$-torsion  with respect to $s$. Hence $s\Tor_1^R(R/I,\prodi F_i)\subseteq \Ker(\theta)$. So $\Tor_1^R(R/I,\prodi F_i)$ is $u$-$S$-torsion. Hence  $\prodi F_i$ is $u$-$S$-flat by  Proposition \cite[Theorem 4.3]{Zuscoh-24}.
 	
 $(4)\Rightarrow (1)$ Let $\mathscr{I}=\{I_i\mid i\in\Gamma\}$ be the set of all finitely generated ideals of $R$, and let $0\rightarrow L_{I_i}\rightarrow F_i\rightarrow I_i\rightarrow 0$ be an exact sequence with $F_i$ finitely generated free for each $i\in\Gamma$. Let $\Lambda$  be cardinal greater than the sum of all ordinals of $L_{I_i}$s.
 		
 For each $I\in \mathscr{I}$, 		
consider the  following commutative diagram with exact rows:
 	$$\xymatrix{
 		&I\otimes_R \prodi R\ar[d]_{g}\ar[r]^{f} & R\otimes_R  \prodi R \ar[d]_{}^{\cong}\ar[r]^{} & R/I\otimes_R  \prodi R \ar[d]_{}^{\cong}\ar[r]^{} &  0\\
 		0 \ar[r]^{} & \prodi (I\otimes_RR)\ar[r]^{} &\prod_{i\in \Lambda} (R\otimes_RR ) \ar[r]^{} & \prodi (R/I\otimes_RR ) \ar[r]^{} &  0.\\}$$
By assumption, $\prodi R$ is a $u$-$S$-flat module with respect to some fixed element $s\in S$, it follows that $f$ is a $u$-$S$-monomorphism with respect to $s$. So $g$ is also a $u$-$S$-monomorphism with respect to $s$. 	Consider the  following commutative diagram with exact rows:
 	$$\xymatrix{
 		&L_I\otimes_R \prodi R\ar[d]_{h}\ar[r]^{} & F\otimes_R  \prodi R \ar[d]_{}^{\cong}\ar[r]^{} & I\otimes_R  \prodi R \ar[d]_{}^{g}\ar[r]^{} &  0\\
 		0 \ar[r]^{} & \prodi (L_I\otimes_RR)\ar[r]^{} &\prod_{i\in \Lambda} (F\otimes_RR ) \ar[r]^{} & \prodi (I\otimes_RR ) \ar[r]^{} &  0.\\}$$
 Since $g$ is a  $u$-$S$-monomorphism with respect to $s$, $h$ is a $u$-$S$-epimorphism with respect to $s$.  We will show $L_I$ is $S$-finite with respect to $s$. Indeed, consider the following exact sequence
 	$$\xymatrix{
 		L_I\otimes_R  R^\Lambda \ar[rr]^{h}\ar@{->>}[rd] &&L_I^\Lambda \ar[r]^{} & T\ar[r]^{} &  0\\
 		&\Im h \ar@{^{(}->}[ru] &&  &   \\}$$
 	with $T$ a $u$-$S$-torsion module with respect to $s$.
 	Consider an element $x \in \prod_{i \in \Lambda} L_{I_i}$ such that every element of $L_{I}$ appears at least once among the components of $x$. Then  $sx\subseteq \Im h$.
 	Subsequently, there exist $m_j\in L_I, r_{j,i}\in R, i\in \Lambda, j=1,\dots,n$
 	such that
 	$$sx=h(\sum_{j=1}^n m_j\otimes (r_{j,i})_{i\in L_I})=(\sum_{j=1}^n m_j r_{j,i})_{i\in L_I}.$$
 	Set $U=\langle m_j\mid j=1,\dots,n\rangle $ be the finitely generated submodule of $L_I$. Now, for any $m\in L_I$, $sm\in \langle \sum_{j=1}^n m_j r_{j,m} \rangle\subseteq  U$, thus the embedding map $U\hookrightarrow L_I$ is a $u$-$S$-isomorphism with respect to $s$ and so $L_I$ is  $S$-finite  with respect to $s$. Consequently, $I$ is $u$-$S$-finitely presented with respect to $s$. Hence,  $R$ is $u$-$S$-coherent with respect to $s$.

 (4)$\Rightarrow$ (3): Let 	$\{F_i\mid i\in\Lambda\}$ be a family of flat modules. Then, by assumption, $\prodi F_i$ has a $u$-$S$-flat $u$-$S$-preenvelope  $f:\prodi F_i\rightarrow F$ with $F$ $u$-$S$-flat. Hence there exists $s\in S$ such that for any $i\in\Lambda$ we have the following commutative diagram:
 $$\xymatrix{
 	 \prodi F_i\ar[r]^{f}\ar[d]^{\pi_i} & F \ar[d]^{g_i} \\
  F_i\ar[r]^{\times s} &F_i,}$$
 Consequently, $s\Id_{\prodi F_i}=(\prodi g_i)\circ f$. So  $$0\rightarrow \prodi F_i\xrightarrow {f}F\rightarrow \Coker (f)\rightarrow 0$$	is a $u$-$S$-split  short $u$-$S$-exact sequence.  It follows by Lemma \ref{ussplit-usflat} that $\prodi F_i$ is $u$-$S$-flat.

Now, we assume that  every $u$-$S$-flat module is $u$-$S$-isomorphic to a flat module.
 	
 (3)$\Rightarrow$ (4): 	Let $M$ be an $R$-module and Card$M\leq \aleph_\beta$. Then by \cite[Lemma  5.3.12]{EJ11} there is an infinite cardinal $\aleph_\alpha$ such that if $F$ is a flat module and $S$ is a submodule of $F$ with Card$S\leq \aleph_\alpha$, there is a pure, hence flat, submodule $G$ of $F$ with $S\subseteq G$ and  Card$G\leq \aleph_\alpha$. So for any flat $R$-module $F_i$ and any homomorphism $\phi_i': M\rightarrow F_i$, there is a   flat  $R$-submodule $G_i$ of $F_i$ satisfying that $G_i$ contains $f(M)$  and  Card$G_i\leq \aleph_\alpha$. So,  we have a natural factorization $M\xrightarrow{\phi_i} G_i\rightarrow F_i$.

Note Card$G_i\leq \aleph_\alpha$ for each $i$, so all such $G_i$ (up to isomorphism) constitute a set, which is denoted by $\{G_i\mid i\in \Lambda\}$.  We claim that the natural $R$-homomorphism $\phi:=\prodi \phi_i:M\rightarrow \prodi G_i$ is $u$-$S$-flat $u$-$S$-preenvelope of $M$. Indeed, by assumption, $\prodi G_i$ is $u$-$S$-flat. Let $f:M\rightarrow F$ be an $R$-homomorphism with $F$ $u$-$S$-flat. Then $F$ is $u$-$S$-isomorphic to a flat module $F'$.  Hence there is $s\in S$  and  $u$-$S$-isomorphisms $t:F'\rightarrow F$ and $u$-$S$-isomorphism $t':F\rightarrow F'$ such that $t \circ t' = s \Id_F$ and $t' \circ t = s \Id_{F'}$ by \cite[Lemma 1.2]{ZQusproj}. By the property of each $G_i,$
   there exists an $R$-homomorphism $g:\prodi G_i\rightarrow F'$ such that $g\circ \phi=t'\circ f$:
 $$\xymatrix{
 	M\ar[r]^{\phi}\ar[d]_{f} & \prodi G_i \ar[d]^{g}\\
 	F\ar@/^/[r]^{t'} &F'\ar@/^/[l]^{t}.}$$
 Hence, $(t\circ g)\circ \phi=t\circ t'\circ f=sf$. Consequently, $\phi$ is a $u$-$S$-flat $u$-$S$-preenvelope.	
 \end{proof}

\begin{remark} The equivalence of $(1)$ and $(3)$ in Theorem \ref{usflatpreenvelope}  is slightly different with that in \cite[Theorem 4.4]{Zuscoh-24}. As to the equivalence of $(1)$ and $(4)$, we don't know whether every $u$-$S$-flat module   is $u$-$S$-isomorphic  to a flat module in general.
\end{remark}

\section{Characterizing $u$-$S$-coherent rings by $u$-$S$-absolutely pure modules}


It is well-known that a ring $R$ is coherent if and only if any pure quotient of  an absolutely pure $R$-module is  absolutely pure, if and only if any direct limit of  absolutely pure $R$-modules is  absolutely pure (see \cite{S70}). In this section, we will give a uniform $S$-versions of this result.	

Recall the notion of uniformly $S$-absolutely pure modules from \cite{Z23usapm}. An $R$-module $E$ is said to be \emph{$u$-$S$-absolutely pure} (abbreviates \emph{uniformly $S$-absolutely pure}) (with respect to $s$) if
 there exists an element $s\in S$ satisfying that  for any finitely presented $R$-module $N$, $\Ext_R^1(N,E)$ is  $u$-$S$-torsion with respect to $s$, or equivalently,  there exists an element $s\in S$ satisfying that  if  $P$ is finitely generated projective, $K$ is a finitely generated submodule of $P$ and  $f:K\rightarrow E$ is an  $R$-homomorphism, then there is an $R$-homomorphism $g:P\rightarrow E$ such that $sf=gi$.

\begin{lemma}\label{uscohusfp}	Let $R$ be a ring, $S$ be a  multiplicative subset of $R$, and $M$ an $R$-module. Suppose  $M$  is  $u$-$S$-coherent with respect to some $s\in S$. Then $M$ is  $u$-$S$-finitely presented with respect to $s^2$.
\end{lemma}
\begin{proof}
Suppose  $M$  is  $u$-$S$-coherent with respect to some $s\in S$. Then $M$ is $S$-finite with respect to $s$, and hence there is  a finitely generated $R$-submodule $K$ of $M$ such that $sM\subseteq K\stackrel{i}{\hookrightarrow} M$. Note that $K$ is $u$-$S$-finitely presented with respect to $s$. So there is an exact sequence $0\rightarrow T_1\rightarrow F\xrightarrow{f} K\rightarrow T_2\rightarrow 0$ with $F$ finitely presented and $sT_1=sT_2=0$. Then one can construct an exact sequence $0\rightarrow T_1\rightarrow F\xrightarrow{f\circ i} M\rightarrow T_3\rightarrow 0$ with $s^2T_3=0$. Hence,  $M$ is  $u$-$S$-finitely presented with respect to $s^2$.
\end{proof}

Let $\xi: 0\rightarrow A\xrightarrow{f} B\xrightarrow{g} C\rightarrow 0$ be a $u$-$S$-short exact sequence. Then $\xi$ is said to be $u$-$S$-split (with respect to $s$) provided that there is  $s\in S$ and $R$-homomorphism $f':B\rightarrow A$ such that $f'(f(a))=sa$ for any $a\in A$, that is, $f'\circ f=s\Id_A$.

\begin{lemma}\label{dir usfp} Let $\xi: 0\rightarrow A\xrightarrow{f} B\xrightarrow{g} C\rightarrow 0$ be a $u$-$S$-split short exact sequence of $R$-modules with respect to $s$. Suppose $B$ is finitely presented, then $C$ is $u$-$S$-finitely presented  with respect to $s$.
\end{lemma}
\begin{proof} First, we claim that $A$  is $S$-finite with respect to $s$.  Indeed, let $f': B\rightarrow A$ be $R$-homomorphisms such that $f'\circ f=s\Id_A$. Then for any $a\in A$, we have $sa=f'(f(a))$. Hence, $sA\subseteq \Im(f')\subseteq A$. Since $B$ is finitely presented, we have $\Im(f')$ is finitely generated. Consequently, $A$  is $S$-finite with respect to $s$.

Next, we claim that $C$ is $u$-$S$-finitely presented  with respect to $s$. Indeed, Consider the following commutative:
	$$\xymatrix@R=20pt@C=25pt{
		0 \ar[r]^{}&\Im(f') \ar[r]^{}\ar@{^{(}->}[d]^{}&B \ar[r]^{}\ar@{=}[d]^{}&L\ar@{->>}[d]^{\pi}\ar[r]  &0\\
0 \ar[r]^{}&A\ar@{->>}[d]^{} \ar[r]^{f}&B \ar[r]^{g}&C\ar[r]  &0\\
		&T&& &\\}$$
 Then $sT=0.$ Since $\Im(f')$ is finitely generated and $B$ is finitely presented, $L$ is also finitely presented. It follows by the snake Lemma that $\Ker(\pi)\cong T$, and hence $s\Ker(\pi)=0$. Considering the exact sequence $0\rightarrow \Ker(\pi)\rightarrow L\rightarrow C\rightarrow 0$, we have $C$ is $u$-$S$-finitely presented  with respect to $s$.
\end{proof}

Recently, Bouziri \cite{Bourzri26} characterized $u$-$S$-coherent rings by dualising $u$-$S$-absolutely pure modules. Now, we give a new characterization of  $u$-$S$-coherent rings by considering the pure quotients or direct limits of  $u$-$S$-absolutely pure modules. Recall that a multiplicative subset $S$ of $R$ is said to be anti-Archimedean if $\bigcap\limits_{n\geq 1}s^nR\bigcap S\not=\emptyset.$

\begin{theorem}\label{s-d-d-non}
Let $R$ be a ring, $S$ be an anti-Archimedean  multiplicative subset of $R$. Then the following statements are equivalent:
	\begin{enumerate}
		\item  $R$ is $u$-$S$-coherent;
		\item there is $s\in S$ such that any pure quotient of an absolutely pure $R$-module is $u$-$S$-absolutely pure with respect to $s$;	
		\item  there is $s\in S$ such that  any direct limit of  absolutely pure $R$-modules is $u$-$S$-absolutely pure with respect to $s$.		
	\end{enumerate}
\end{theorem}
\begin{proof} $(1)\Rightarrow (2)$: Suppose  $R$ is a $u$-$S$-coherent ring with respect to some $s'\in S$. Let $B$ be an absolutely pure $R$-module. Let $A$ be a pure submodule of $B$, and let $C = B/A$. Let $s\in \bigcap\limits_{n\geq 1}(s')^nR\bigcap S$. We need to show that $C$ is $u$-$S$-absolutely pure with respect to  $s$. Let $P$ be a finitely generated projective $R$-module and $K$ a finitely generated submodule of $P$. Let $i:K\rightarrow P$ be the natural embedding map,  and  $f_C:K\rightarrow C$ an $R$-homomorphism. We claim that there is an $R$-homomorphism $g:P\rightarrow C$ such that $sf_C=gi$.   Consider the following exact sequence
	$0\rightarrow L\xrightarrow{i_L} P'\xrightarrow{\pi_{P'}} K\rightarrow 0$ with $P'$ finitely generated projective. Consider the following commutative diagram with exact sequences.
	$$\xymatrix@R=25pt@C=40pt{
		0\ar[r]^{}&L \ar[r]^{i_L}\ar[d]_{f_A} &P' \ar[d]^{f_B}\ar[r]^{\pi_{P'}} &\ar[d]^{f_C} K  \ar[r]^{}& 0 \\
		0\ar[r]&A\ar[r]^{i_A} &B\ar[r]^{\pi_B} & C \ar[r]^{}&0.\\ }$$
	Since $R$ is $u$-$S$-coherent with respect to $s'$, the finitely generated projective $R$-module $P$ is a $u$-$S$-coherent $R$-module with respect to some $t'\in S$, a power of $s'$, and so is its finitely generated  $R$-submodule $K$. It follows by \cite[Theorem 3.2(2)]{Zuscoh-24} that $L$  is also $u$-$S$-coherent with respect to $t'$,
	and so is $u$-$S$-finitely presented  with respect to $t=t'^2$ by Lemma \ref{uscohusfp}. Therefore there is an exact sequence $$0\rightarrow T_1\rightarrow K'\xrightarrow{i_{K'}} L\xrightarrow{\pi_L} T\rightarrow 0$$ with $tT=tT_1=0$    and $K'$ finitely presented. Consider the following push-out:
	
	$$\xymatrix@R=20pt@C=25pt{ & & 0\ar[d]&0\ar[d]&\\
		0 \ar[r]^{} & \Im(i_{K'})\ar@{=}[d]\ar[r]^{e_L} & L\ar[d]_{i_L}\ar[r]&T\ar[d]\ar[r]^{} &  0\\
		0 \ar[r]^{} & \Im(i_{K'})\ar[r]^{e_{P'}} & P' \ar[d]\ar[r]^{} &X\ar[d]^{\pi_K}\ar[r]^{} &  0\\
		&  & K \ar[d]\ar@{=}[r]^{} &K\ar[d] &  \\
		& & 0 &0 &\\}$$
It follows by \cite[Lemma 4]{M67} that $A$ is absolutely pure.  Consider the following commutative diagram:	$$\xymatrix@R=25pt@C=40pt{	0\ar[r]^{}&\Im(i_{K'}) \ar[r]^{e_{P'}}\ar[d]_{e_L} &P'\ar@{.>}[ldd]^{g_A} \ar@{=}[d]^{}\ar[r]^{} &\ar[d]^{\pi_K} X \ar@{.>}[ldd]^{g_B} \ar[r]^{}& 0 \\
 	0\ar[r]^{}&L \ar[r]^{i_L}\ar[d]_{f_A} &P' \ar[d]^{f_B}\ar[r]^{} &\ar[d]^{f_C} K  \ar[r]^{}& 0 \\
 	0\ar[r]&A\ar[r]^{i_A} &B\ar[r]^{\pi_B} & C \ar[r]^{}&0.\\ }$$
 Since $\Im(i_{K'})$ is finitely generated, then   there is an $R$-homomorphism $g_A:P'\rightarrow A$ such that $f_A\circ e_L=g_A\circ e_{P'}$ (see \cite[Corollary 2]{M70}).  And it follows by \cite[Exercise 1.60]{FK16} that there is an $R$-homomorphism $g_B:X\rightarrow B$ such that $\pi_B\circ g_B=f_C\circ \pi_K$.
 Since $K$ is $u$-$S$-isomorphic to $X$ with respect to $t$, by the proof of \cite[Proposition 1.1]{Z23usapm} there exist $\pi'_K:K\rightarrow X$ and $t_1\in S$, which is also a power of $s'$ say $t_1=s'^n$, such that $\pi_K\circ \pi'_K=t_1\Id_K$ and $\pi'_K\circ \pi_K=t_1\Id_X$.

Since $B$ is absolutely pure, it follows by \cite[Corollary 2]{M70} again that there exists $R$-homomorphism $g'_B:P\rightarrow B$ such that the following diagram is commutative:
	$$\xymatrix@R=30pt@C=40pt{
		K\ar[r]^{i}\ar[d]_{g_B\circ\pi_K'}&P\ar@{.>}[ld]^{g'_B}\\
		B&\\}$$
	We have $t_1f_C=f_C\circ\pi_K\circ\pi'_K=\pi_B\circ g_B\circ \pi'_K=\pi_B\circ g'_B\circ i=g'\circ i$, where $g':=\pi_B\circ g'_B$. Since $s\in \bigcap\limits_{n\geq 1}(s')^nR\bigcap S$, $s=s'^nr_n=t_1r_n\in S$ and $r_n\in R$. Let $g=r_ng'$. Then $sf_C=g\circ i$. It follows by \cite[Theorem 3.2]{Z23usapm} that $C$ is a $u$-$S$-absolutely pure $R$-module with respect to $s$.
	
	$(2)\Rightarrow (3)$:	Let $\{M_i\}_{i\in\Gamma}$ be a direct system of absolutely pure $R$-modules. Then there is a pure exact sequence  $0\rightarrow K\rightarrow \bigoplus\limits_{i\in\Gamma}M_i\rightarrow {\lim\limits_{\longrightarrow _{i\in\Gamma}}}M_i\rightarrow 0$. Note that $\bigoplus\limits_{i\in\Gamma}M_i$ is absolutely pure, so is ${\lim\limits_{\longrightarrow _{i\in\Gamma}}}M_i$ by (2).

	$(3)\Rightarrow (1)$: Suppose  the ring $R$ satisfies that any direct limit of  absolutely pure $R$-modules is $u$-$S$-absolutely pure with respect to $s$. Let $I$ be a finitely generated  ideal of $R$,  $\{M_i\}_{i\in\Gamma}$  a direct system of $R$-modules. Let $\alpha: I\rightarrow \lim\limits_{\longrightarrow _{i\in\Gamma}}M_i$ be a homomorphism. For any $i\in \Gamma$, $E(M_i)$ is the injective envelope of $M_i$. Then $E(M_i)$ is absolutely pure. By (3), we have $s\Ext_R^1(R/I, \lim\limits_{\longrightarrow _{i\in\Gamma}}M_i)=0$. So there exists an $R$-homomorphism $\beta:R\rightarrow \lim\limits_{\longrightarrow _{i\in\Gamma}}E(M_i)$ such that the following  diagram commutes:
	$$\xymatrix@R=20pt@C=25pt{
		0\ar[r]^{}&I\ar[r]^{}\ar[d]_{s\alpha}&  R\ar[r]^{}\ar@{.>}[d]^{\beta} & R/I\ar@{.>}[d]^{}\ar[r]^{} &0\\
		0\ar[r]^{}&{\lim\limits_{\longrightarrow _{i\in\Gamma}}}M_i \ar[r]^{}&{\lim\limits_{\longrightarrow _{i\in\Gamma}}}E(M_i)  \ar[r]^{} &{\lim\limits_{\longrightarrow _{i\in\Gamma}}}E(M_i)/M_i\ar[r]^{} & 0.\\}$$
	Thus, by \cite[Lemma 2.7]{gt}, there exists $j\in \Gamma$, such that $\beta$ can factor through $R\xrightarrow{\beta_j} E(M_j)$. Consider the following commutative diagram:
	$$\xymatrix@R=20pt@C=25pt{
		0\ar[r]^{}&I\ar[r]^{}\ar@{.>}[d]_{s\alpha_j}&  R\ar[r]^{}\ar[d]^{\beta_j} & R/I\ar@{.>}[d]^{}\ar[r]^{} &0\\
		0\ar[r]^{}&M_j \ar[r]^{}&E(M_j)  \ar[r]^{} &E(M_j)/M_j\ar[r]^{} & 0.\\}$$
	Since the composition $I\rightarrow R\rightarrow E(M_j)\rightarrow E(M_j)/M_j$ becomes to be $0$ in the direct limit, we can assume $I\rightarrow R\rightarrow E(M_j)$ can factor through some  $I\xrightarrow{s\alpha_j} M_j$. Thus $s\alpha$ can factor through $M_j$. Consequently, the natural homomorphism  $\lim\limits_{\longrightarrow _{i\in\Gamma}} \Hom_R(I,M_i)\xrightarrow{\phi}  \Hom_R(I, \lim\limits_{\longrightarrow _{i\in\Gamma}}M_i)$ is a $u$-$S$-epimorphism with respect to $s$. Now suppose  $\{M_i\}_{i\in\Gamma}$ is a direct system of finitely presented $R$-modules such that $\lim\limits_{\longrightarrow _{i\in\Gamma}} M_i=I$. Then there exists $f\in \Hom_R(I,M_j)$ with $j\in \Gamma$ such that the identity map  $s\Id_I= \phi(u_j(f))$ where $u_j$ is the natural homomorphism $\Hom_R(I,M_j)\rightarrow \lim\limits_{\longrightarrow _{i\in\Gamma}} \Hom_R(I,M_i)$. That is, we have the following commutative diagram: 	
	$$\xymatrix@R=10pt@C=40pt{
		I\ar[rd]^{f}\ar[dd]_{s\Id_I}&\\
		&M_i\ar[ld]^{\delta_i}\\
		I=\lim\limits_{\longrightarrow _{i\in\Gamma}} M_i&\\}$$
	So, by the proof of \cite[Lemma 2.4]{ZQusproj}, $0\rightarrow \Ker{(\delta_i)} \rightarrow M_i\xrightarrow{\delta_i} \Im(\delta_i)\rightarrow 0$ is a $u$-$S$-split short exact sequence with respect to $s^3$. It follows by the proof of Lemma \ref{dir usfp} that	there is a finitely presented $R$-module $F$ and a $u$-$S$-torsion module $T_1$ (with respect to $s^3$) such that the sequence $0\rightarrow T_1\rightarrow F\rightarrow \Im(\delta_i) \rightarrow0$ exact. Combining the exact sequence $0\rightarrow \Im(\delta_i)\rightarrow I \rightarrow I/\Im(\delta_i) \rightarrow 0$, we obtain a long exact sequence $$0\rightarrow T_1\rightarrow F\rightarrow I \rightarrow I/\Im(\delta_i) \rightarrow 0.$$ Note that $s (I/\Im(\delta_i))=0.$
Consequently,	$I$ is $u$-$S$-finitely presented with respect to $s^3$. Hence $R$ is a $u$-$S$-coherent ring.	
\end{proof}
 \begin{remark}  It is well-known that a ring $R$ is coherent if and only if the class of absolutely pure $R$-modules is (pre)covering (see \cite{DD18,DD19}). It is a natural question that does the following statement hold?
 	
  A ring $R$ is $u$-$S$-coherent if and only if any $R$-module has a $u$-$S$-absolutely pure $u$-$S$-(pre)cover (see Definition \ref{usprecover}).
 \end{remark}

\section{Characterizing $u$-$S$-Noetherian rings by $u$-$S$-injective modules}

 Let $R$ be a ring and $S$ a multiplicative subset of $R$. Recall from \cite{CKQWZ} that an $R$-module $E$ is said to be {\it $u$-$S$-injective} (with respect to $s$) provided that the induced sequence $$0\rightarrow \Hom_R(C,E)\rightarrow \Hom_R(B,E)\rightarrow \Hom_R(A,E)\rightarrow 0$$ is $u$-$S$-exact (with respect to $s$) for any exact sequence $0\rightarrow A\rightarrow B\rightarrow C\rightarrow 0$ of $R$-modules, or equivalently,
 	  $\Ext_R^1(M,E)$ is  $u$-$S$-torsion with respect to $s$ for any  $R$-module $M$ (see \cite[Lemma 2.4]{Bourzri26}).
 		

 \begin{lemma}\label{ussplit-usinj}
 	Let $0\rightarrow A\rightarrow B\rightarrow C\rightarrow 0$ be a $u$-$S$-split  short $u$-$S$-exact sequence. Suppose $B$ is $u$-$S$-injective. Then so are $A$ and $C$.
 \end{lemma}
 \begin{proof}	Let $\xi:0\rightarrow A\xrightarrow{f} B\xrightarrow{g} C\rightarrow 0$ be a $u$-$S$-split  short $u$-$S$-exact sequence with $B$  $u$-$S$-injective.  Let $X$ be a module. We will prove that $\Ext_R^1(X,A)$ is annihilated by some element in $S$.
 	Let  $f':B \rightarrow A$ such that $f'\circ f = s\Id_A$ with $s \in S$.
 	Then $f,f'$ induce the maps $\Ext_R^1(X,f): \Ext_R^1(X,A) \rightarrow \Ext_R^1(X,B)$ and $\Ext_R^1(X,f'): \Ext_R^1(X,B) \rightarrow \Ext_R^1(X,A)$  with  $\Ext_R^1(X,f')\circ \Ext_R^1(X,f)=s\Id_{\Ext_R^1(X,A)}$.
 	As $t\Ext_R^1(X,B) = 0$ for some $t \in S$, we get $st\Ext_R^1(X,A) = 0$. Consequently, $A$ is  $u$-$S$-injective. It follows by \cite[Proposition 4.7]{CKQWZ} that $C$ is  also $u$-$S$-injective.
 \end{proof}

 An $R$-module $E$ is said to be \emph{$u$-$S$-cofree} if $F$ is $u$-$S$-isomorphic to a cofree module. The latter is defined to be a direct product of copies of $R^+$.

\begin{proposition}\label{us-cofree}
Let $R$ be a ring and $S$ a multiplicative subset of $R$. Then an $R$-module
$E$ is $u$-$S$-injective if and only if $E$ is a direct summand of a $u$-$S$-cofree module.
\end{proposition}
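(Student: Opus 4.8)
The plan is to imitate the classical argument that a module is injective iff it is a direct summand of a product of copies of an injective cogenerator, adapting each step to the uniform $S$-setting. For the forward direction, suppose $E$ is $u$-$S$-injective with respect to some $s\in S$. I would first recall that for any $R$-module $M$, the canonical evaluation map $M\to M^{++}$ is a (pure) monomorphism, and that $M^+=\Hom_{\mathbb Z}(M,\mathbb Q/\mathbb Z)$ is always a pure-injective module; applying this to $E$ gives an embedding $0\to E\to E^{++}$. Now $E^{++}=\Hom_{\mathbb Z}(E^+,\mathbb Q/\mathbb Z)$ is a quotient of a free $\mathbb Z$-module tensored appropriately — more directly, choosing a surjection $R^{(\Lambda)}\twoheadrightarrow E^+$ of $R$-modules and dualizing yields a monomorphism $E^{++}\hookrightarrow (R^+)^{\Lambda}$, i.e. into a cofree module $P$. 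Composing, we get an honest short exact sequence $0\to E\xrightarrow{\iota} P\to P/E\to 0$ with $P$ cofree, hence $u$-$S$-injective (a product of copies of $R^+$; one checks $R^+$ is injective as an $R$-module, being a cogenerator, so $P$ is genuinely injective and a fortiori $u$-$S$-injective).

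Next I would show this sequence is $u$-$S$-split. Since $E$ is $u$-$S$-injective with respect to $s$, applying the definition to the exact sequence $0\to E\xrightarrow{\iota}P\to P/E\to 0$ and the identity $\Id_E\in\Hom_R(E,E)$: the map $\Hom_R(\iota,E)\colon\Hom_R(P,E)\to\Hom_R(E,E)$ is a $u$-$S$-epimorphism with respect to $s$, so there is $f'\colon P\to E$ with $f'\circ\iota = s\,\Id_E$. This is exactly the statement that $0\to E\to P\to P/E\to 0$ is $u$-$S$-split with respect to $s$. Then $E$ is $u$-$S$-isomorphic to a direct summand of $P$ in the appropriate sense — more precisely, by \cite[Lemma 2.4]{ZQusproj} (the $u$-$S$-splitting produces, up to $u$-$S$-isomorphism, a direct sum decomposition), one obtains that $E$ is a direct summand of a module $u$-$S$-isomorphic to $P$, hence a direct summand of a $u$-$S$-cofree module. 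Alternatively, and perhaps more cleanly, I would argue that $P$ is $u$-$S$-isomorphic to $E\oplus(P/E)$: the maps $(\iota, \pi)\colon E\oplus C \to P$ built from the $u$-$S$-splitting data (here $C$ a suitable complement obtained as in the proof of \cite[Lemma 2.4]{ZQusproj}) form a $u$-$S$-isomorphism, so $E$ is literally a direct summand of the $u$-$S$-cofree module $E\oplus C$, which is $u$-$S$-isomorphic to the cofree $P$.

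For the converse, suppose $E$ is a direct summand of a $u$-$S$-cofree module $F$, say $F = E\oplus E'$ and $F$ is $u$-$S$-isomorphic to a cofree module $P=(R^+)^{\Lambda}$. First, $P$ is $u$-$S$-injective (indeed injective, as noted). By \cite[Proposition 4.7]{CKQWZ} or directly from the $\Ext$-characterization, being $u$-$S$-injective is preserved under $u$-$S$-isomorphism: if $g\colon F\to P$ is a $u$-$S$-isomorphism with $u$-$S$-inverse $g'$, then for any $R$-module $X$ the induced maps on $\Ext_R^1(X,-)$ compose to multiplication by some $s\in S$ in each direction, and since $\Ext_R^1(X,P)$ is $u$-$S$-torsion (in fact zero), so is $\Ext_R^1(X,F)$ after multiplying by $s$; hence $F$ is $u$-$S$-injective. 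Finally, a direct summand of a $u$-$S$-injective module is $u$-$S$-injective: if $F=E\oplus E'$ then $\Ext_R^1(X,F)\cong\Ext_R^1(X,E)\oplus\Ext_R^1(X,E')$, so $\Ext_R^1(X,E)$ is a quotient (direct summand) of the $u$-$S$-torsion module $\Ext_R^1(X,F)$, hence $u$-$S$-torsion with the same witness. Therefore $E$ is $u$-$S$-injective.

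The main obstacle I anticipate is not any single deep point but the bookkeeping in the forward direction: making sure the embedding $E\hookrightarrow P$ into a genuinely cofree (not merely $u$-$S$-cofree) module exists and that the $u$-$S$-splitting of the resulting sequence legitimately yields $E$ as a direct summand of a $u$-$S$-cofree module rather than something only $u$-$S$-isomorphic to such. The cleanest route is to observe that "$u$-$S$-cofree" is by definition closed under $u$-$S$-isomorphism, so producing $E\oplus C$ with $E\oplus C$ $u$-$S$-isomorphic to the cofree $P$ suffices; the complement $C$ and the relevant $u$-$S$-isomorphism are extracted exactly as in the proof of \cite[Lemma 2.4]{ZQusproj}, which we are entitled to cite.
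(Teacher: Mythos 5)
Your proposal is correct and follows essentially the same route as the paper: embed $E$ into a cofree module, use $u$-$S$-injectivity to produce the $u$-$S$-splitting $f'\circ\iota=s\,\Id_E$, invoke the $u$-$S$-split decomposition lemma to see that $E\oplus(G/E)$ is $u$-$S$-isomorphic to the cofree module (hence $u$-$S$-cofree), and observe the converse via preservation of $u$-$S$-injectivity under $u$-$S$-isomorphisms and direct summands. The paper merely compresses the steps you spell out (citing \cite[Lemma 2.8]{KMOZ24} for the decomposition and dismissing the converse as obvious), so there is no substantive difference.
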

 \begin{proof}
 	Let $E$ be a $u$-$S$-injective. Then $E$ can be embeded into a cofree $R$-module $G$. Consider the  $u$-$S$-split exact sequence $0\rightarrow E\rightarrow G\rightarrow G/E\rightarrow0$. Then $G/E$ is  $u$-$S$-injective, and $G$  is $u$-$S$-isomorphic to $E\oplus G/E$ by \cite[Lemma 2.8]{KMOZ24}. So $E\oplus G/E$ is $u$-$S$-cofree. The converse holds obviously.
 \end{proof}

  	


  It is well-known that a ring $R$ is Noetherian if and only if the class of injective modules is precovering. The main aim of this section is to give the uniform $S$-versions of this result.
 Denote by  $u$-$S\mathcal{I}$ the class of all $u$-$S$-injective modules. The author in \cite{Bourzri26} characterized when $u$-$S\mathcal{I}$ is (pre)covering.
  \begin{theorem} \cite[Theorem 2.16]{Bourzri26}
 	Let $R$ be a ring, $S$ be a regular  multiplicative subset of $R$. Consider the following statements:
 	\begin{enumerate}
 		\item  $u$-$S\mathcal{I}$ is closed under direct sums;
 		\item   $u$-$S\mathcal{I}$ is closed under direct limits;
 		\item  $u$-$S\mathcal{I}$ is a precovering class;
 		\item  $u$-$S\mathcal{I}$ is a covering class;
 		\item  $R$ is  $u$-$S$-Noetherian;
 	\end{enumerate}
 	Then $(1)\Leftrightarrow (2)\Leftrightarrow (3)\Leftrightarrow (4)\Rightarrow (5)$.
 \end{theorem}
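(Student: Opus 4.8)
The plan is to prove the cycle of implications $(1)\Rightarrow (2)\Rightarrow (3)\Rightarrow (4)\Rightarrow (1)$ together with $(1)\Rightarrow (5)$, following the classical template for Noetherian rings adapted to the uniform $S$-setting. The nontrivial input is that the uniform witness $s\in S$ must be tracked carefully through each step so that it does not degenerate, and for this the key lemmas are Lemma \ref{ussplit-usinj} (a $u$-$S$-split subobject of a $u$-$S$-injective is $u$-$S$-injective) and Proposition \ref{us-cofree} (a $u$-$S$-injective module is a direct summand of a $u$-$S$-cofree module). For the regularity hypothesis on $S$, the point is that it guarantees the natural maps relating $\Ext$ with (co)products behave well enough that a fixed $s$ can be extracted.

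First I would prove $(1)\Rightarrow (2)$: a direct limit of $u$-$S$-injective modules is a quotient of a direct sum of the same modules by a pure submodule, and since $u$-$S\mathcal{I}$ is closed under direct sums by hypothesis, it suffices to know $u$-$S\mathcal{I}$ is closed under pure quotients; this follows because a pure short exact sequence is $u$-$S$-pure, and $u$-$S$-injective modules are detected by $\Ext^1_R(-,E)$ being $u$-$S$-torsion, which passes to $u$-$S$-pure quotients (using the long exact sequence and that the relevant connecting maps are killed by a single $s$). The implication $(2)\Rightarrow (3)$ is the standard argument that a class closed under direct limits (and containing enough objects — here every module embeds $u$-$S$-injectively into a cofree module, which is $u$-$S$-injective) is precovering: one builds a $u$-$S$-injective precover as a suitable direct limit / set-indexed construction, invoking a Hom-set cardinality bound as in the proof of Theorem \ref{usflatpreenvelope}$(3)\Rightarrow(4)$. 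Then $(3)\Rightarrow (4)$ is a Wakamatsu-type argument: from a $u$-$S$-injective precover one extracts a $u$-$S$-injective cover by the usual minimality reduction, where the endomorphism condition in the definition of $u$-$S$-cover is exactly what a $u$-$S$-version of Wakamatsu's lemma produces.

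For $(4)\Rightarrow (1)$ I would argue that if every module has a $u$-$S$-injective cover then, given a family $\{E_i\}_{i\in\Lambda}$ of $u$-$S$-injective modules, the module $\bigoplus_i E_i$ has a $u$-$S$-injective cover $f\colon \bigoplus_i E_i \to E$; using that each inclusion $E_i\hookrightarrow \bigoplus_i E_i$ is split and each $E_i$ is $u$-$S$-injective, one produces a retraction showing $f$ is a $u$-$S$-split epimorphism onto a direct summand, so $\bigoplus_i E_i$ is a $u$-$S$-split subobject of a $u$-$S$-injective module, hence $u$-$S$-injective by Lemma \ref{ussplit-usinj}. Finally $(1)\Rightarrow (5)$: if $u$-$S\mathcal{I}$ is closed under direct sums (with a uniform witness $s$ coming from the regularity argument applied to a countable sum of injective envelopes), then for any ascending chain of ideals $I_1\subseteq I_2\subseteq\cdots$ one considers $E=\bigoplus_n E(R/I_n)$, which is $u$-$S$-injective, so $\Ext^1_R(R/I,E)$ is killed by a single $s$ for every ideal $I$; a standard diagram chase then forces $s$ to annihilate the obstruction to finite generation of $\bigcup_n I_n$, and more precisely one recovers that $R$ is $u$-$S$-Noetherian with respect to this $s$ by the usual Bass–Papp style argument, now in uniform form.

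The main obstacle I expect is $(1)\Rightarrow (5)$ and, within it, pinning down a single $s\in S$ that works simultaneously for all ideals: closure of $u$-$S\mathcal{I}$ under direct sums a priori only gives, for each family, \emph{some} $s$, whereas $u$-$S$-Noetherianity demands one $s$ uniform over all ideals. Overcoming this uses the regularity of $S$ in an essential way — as in Bouziri's argument — to show that the witness extracted from a single well-chosen direct sum (e.g. of injective envelopes of all cyclic modules, or a reduction via the cardinality bound) already serves globally; the rest of the implications are routine once the $u$-$S$-split lemmas and the precover/cover machinery are in place.
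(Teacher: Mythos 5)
This statement is quoted verbatim from \cite[Theorem 2.16]{Bourzri26} and the paper offers no proof of it at all --- it is stated only as background for the question about $(5)\Rightarrow(1)$ (which Example \ref{uf not-extsion} answers negatively). So there is no in-paper proof to compare against; the closest material is Theorem \ref{s-injective-ext} together with Lemma \ref{ussplit-usinj}, and your $(4)\Rightarrow(1)$ step is essentially the paper's $(4)\Rightarrow(3)$ argument there (modulo a reversed arrow: the cover of $\bigoplus_i E_i$ is a map $E\to\bigoplus_i E_i$, not the other way). Judged on its own merits, your plan has the right logical skeleton but at least one step contains a genuine gap.

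The gap is in $(1)\Rightarrow(2)$. You reduce a direct limit to a pure quotient of a direct sum and then assert that $u$-$S\mathcal{I}$ is closed under ($u$-$S$-)pure quotients ``using the long exact sequence.'' This does not work: for a pure exact sequence $0\to A\to E\to C\to 0$ with $E$ ($u$-$S$-)injective, the long exact sequence gives $\Ext^1_R(M,E)\to\Ext^1_R(M,C)\to\Ext^2_R(M,A)$, and purity says nothing about the second map; closure of (absolutely pure or injective) modules under pure quotients is precisely the kind of statement that \emph{characterizes} coherence/Noetherianity (compare Theorem \ref{s-d-d-non}), so assuming it here is close to circular. The standard route, which you should follow instead, is to prove $(1)\Rightarrow(5)$ \emph{first} (the uniform Bass--Papp argument, as in \cite[Theorem 4.10]{CKQWZ}) and then deduce $(2)$ from $(5)$: uniform $S$-Noetherianity makes every $R/I$ ($u$-$S$-)finitely presented, so $\Ext^1_R(R/I,-)$ commutes with direct limits up to uniform $S$-torsion. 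A second, smaller issue: you correctly flag the uniformity of the witness $s$ as the main difficulty, but you locate it only in $(1)\Rightarrow(5)$; it bites equally in $(4)\Rightarrow(1)$, where you need a single $s$ such that \emph{every} inclusion $E_i\hookrightarrow\bigoplus_i E_i$ factors through the precover up to $s$ --- the definition of a $u$-$S$-precover only hands you an $s$ depending on the test object $A'$, and the family $\{E_i\}$ has arbitrary cardinality, so this requires either a strengthened (uniform) definition of $u$-$S$-precover or an explicit argument. Finally, note that statement $(1)$ concerns the full class $u$-$S\mathcal{I}$ (summands with possibly different witnesses), not a fixed $s\mathcal{I}$ as in Theorem \ref{s-injective-ext}(2); your sketch does not distinguish these, and the distinction is exactly what makes Example \ref{uf not-extsion} possible.
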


Subsequently, the author \cite{Bourzri26} proposed the following question:\\
\textbf{Question}  \cite[Question 2.17]{Bourzri26}  Does the implication $(5)\Rightarrow (1)$ in the above theorem hold? If not, is there a way to define a uniform $S$-version of the notion of covers so that we can obtain a uniform $S$-version of Enochs-Teply's characterization of Noetherian rings.

The following example shows that the implication $(5)\Rightarrow (1)$ in \cite[Theorem 2.16]{Bourzri26}   may be wrong even for Noetherian rings.

\begin{example}\label{uf not-extsion}{\rm
		Let $R=\mathbb{Z}$ be the ring of integers, $p$ a prime in $\mathbb{Z}$ and  $S=\{p^n \mid n\geq 0\}$. Then  each $\mathbb{Z}/\langle p^k\rangle$ is  $u$-$S$-torsion, and thus is $u$-$S$-injective. However, $\bigoplus\limits_{k=1}^\infty \mathbb{Z}/\langle p^k\rangle$ is not  $u$-$S$-injective. Indeed, since each $\mathbb{Z}/\langle p^k\rangle$ is a reduced group, so is $\bigoplus\limits_{k=1}^\infty \mathbb{Z}/\langle p^k\rangle$. It follows by \cite[Chapter 9, Corollary 6.7]{FS15} that $\bigoplus\limits_{k=1}^\infty \mathbb{Z}/\langle p^k\rangle$ is a subgroup of	$\Ext^1_{\mathbb{Z}}(\mathbb{Q}/\mathbb{Z},\bigoplus\limits_{k=1}^\infty \mathbb{Z}/\langle p^k\rangle)$. Since  $\bigoplus\limits_{k=1}^\infty \mathbb{Z}/\langle p^k\rangle$ is not $u$-$S$-torsion, so is $\Ext^1_{\mathbb{Z}}(\mathbb{Q}/\mathbb{Z},\bigoplus\limits_{k=1}^\infty \mathbb{Z}/\langle p^k\rangle)$. Hence  the direct sum $\bigoplus\limits_{k=1}^\infty \mathbb{Z}/\langle p^k\rangle$ is not $u$-$S$-injective.}
\end{example}

Next, we will try to give a uniform $S$-version of Enochs-Teply's characterization of Noetherian rings.
 We recall some notions on uniform $S$-versions of precovering and covering from \cite{KMOZ24}.

 \begin{definition}\label{usprecover} \cite{KMOZ24} Let $M$ be an $R$-module and $\A$ a class of $R$-modules.
 	\begin{enumerate}
 		\item An $R$-homomorphism  $f\in \Hom_R(A,M)$ with $A\in\A$ is called an  $\A$ $u$-$S$-precover of $M$ provided that  $\Hom_R(A',f):\Hom_R(A',A)\rightarrow \Hom_R(A',M)$ is a $u$-$S$-epimorphism for any $A'\in \A$.
 		
 		\item An $\A$ $u$-$S$-precover $f\in \Hom_R(A,M)$ of $M$ is called an $\A$ $u$-$S$-cover of $M$, provided that $f=f\circ \alpha$ implies that $\alpha$ is a $u$-$S$-isomorphism for any $\alpha\in \End_R(A)$.
 		
 		\item If every $R$-module has an $\A$  $u$-$S$-precover $($resp., $u$-$S$-cover$)$, then $\A$ is called a $u$-$S$-precovering $($resp., $u$-$S$-covering$)$ class.
 	\end{enumerate}
 \end{definition}

 For $s\in S$, let $s\mathcal{I}$ denote the class of all $R$-modules that are $u$-$S$-injective with respect to $s$.  Now, we have the following new uniform $S$-version of Enochs-Teply's characterization of Noetherian rings.

 \begin{theorem}\label{s-injective-ext}
 		Let $R$ be a ring, $S$ be a regular  multiplicative subset of $R$. Consider the following statements:
 	\begin{enumerate}
 		\item  $R$ is  $u$-$S$-Noetherian;
 		\item  for all $s\in S$, any direct sum  of  $R$-modules in $s\mathcal{I}$ is $u$-$S$-injective;
 		\item  any direct sum  of  injective $R$-modules   is $u$-$S$-injective;
 			\item every $R$-module has a $u$-$S$-injective $u$-$S$-precover.
 	\end{enumerate}
 Then $(1)\Leftrightarrow (2)\Leftrightarrow (3)\Leftarrow (4)$. If, moreover, every $u$-$S$-injective module is $u$-$S$-isomorphic to an injective module, then $(3)\Rightarrow (4)$.
 \end{theorem}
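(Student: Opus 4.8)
The statement is the injective-side mirror of Theorem~\ref{usflatpreenvelope}, so the plan is to dualise that proof: direct sums replace direct products, $\Ext_R^1$ replaces $\Tor_1^R$, Lemma~\ref{ussplit-usinj} replaces Lemma~\ref{ussplit-usflat}, and the part of \cite[Theorem 4.3]{Zuscoh-24} is now played by \cite[Lemma 2.4]{Bourzri26} together with the fact that, over a uniformly $S$-Noetherian ring with $S$ regular, the $u$-$S$-injective modules are exactly the $u$-$S$-absolutely pure ones. The implication $(2)\Rightarrow(3)$ is trivial since an injective module lies in $s\mathcal{I}$ for every $s\in S$, and I would prove the cycle $(1)\Rightarrow(2)\Rightarrow(3)\Rightarrow(1)$, then $(4)\Rightarrow(3)$, and finally $(3)\Rightarrow(4)$ under the extra hypothesis.

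For $(1)\Rightarrow(2)$: let $\{E_i\mid i\in\Lambda\}\subseteq s\mathcal{I}$. Since $\Ext_R^1(M,-)$ commutes with direct products, $\prod_{i}E_i$ is $u$-$S$-injective with respect to $s$, hence $u$-$S$-absolutely pure with respect to $s$; as $\bigoplus_{i}E_i$ is a pure submodule of $\prod_{i}E_i$, it is again $u$-$S$-absolutely pure with respect to $s$ by \cite[Proposition 3.3]{Z23usapm} (used already in the proof of Theorem~\ref{s-d-d-non}). It then suffices to see that over a uniformly $S$-Noetherian ring with $S$ regular every $u$-$S$-absolutely pure module $M$ is $u$-$S$-injective: embedding $M\hookrightarrow E(M)$, for each $y\in E(M)$ the cyclic module $R\bar y$ (the image of $Ry$ in $E(M)/M$) is $u$-$S$-finitely presented, so $\Ext_R^1(R\bar y,M)$ is $u$-$S$-torsion and $0\to M\to M+Ry\to R\bar y\to 0$ is $u$-$S$-split with some $t\in S$ that depends only on $M$ and $R$; since $M$ is essential in $M+Ry$ and $t$ is a non-zero-divisor, the retraction $M+Ry\to M$ is injective and forces $ty\in M$, so $tE(M)\subseteq M$ and $M$ is $u$-$S$-isomorphic to $E(M)$. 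Applying this to $\bigoplus_iE_i$ gives $(2)$.

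For $(3)\Rightarrow(1)$: apply $(3)$ to $E^{\ast}:=\bigoplus E(R/I)$, the direct sum of injective envelopes over the set of all ideals $I$ of $R$, to obtain $s_0\in S$ with $E^{\ast}$ $u$-$S$-injective with respect to $s_0$, and claim that every ideal is $S$-finite with respect to $s_0$. If some ideal $I$ is not, pick finitely generated ideals $J_1\subsetneq J_2\subsetneq\cdots\subseteq I$ and $a_n\in I$ with $J_{n+1}=J_n+Ra_n$ and $s_0a_n\notin J_n$ (possible as $s_0I\not\subseteq J_n$), set $J_\infty=\bigcup_nJ_n$, and define $f\colon J_\infty\to E^{\ast}$ with $E(R/J_n)$-component the composite $J_\infty\hookrightarrow R\twoheadrightarrow R/J_n\hookrightarrow E(R/J_n)$ — this lands in the direct sum because each $x\in J_\infty$ lies in $J_n$ for $n\gg0$. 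By $u$-$S$-injectivity of $E^{\ast}$ the inclusion $J_\infty\hookrightarrow R$ yields $g\colon R\to E^{\ast}$ with $g|_{J_\infty}=s_0f$; as $g(1)$ has finite support there is $N$ with the $E(R/J_n)$-component of $g(1)$ zero for all $n>N$, so for such $n$ and all $x\in J_\infty$ the $E(R/J_n)$-component of $g(x)=xg(1)$ vanishes, i.e.\ $s_0x\in J_n$; taking $x=a_{N+1}$ and $n=N+1$ contradicts $s_0a_{N+1}\notin J_{N+1}$. Hence $R$ is uniformly $S$-Noetherian with respect to $s_0$.

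For $(4)\Rightarrow(3)$: given injective modules $\{E_i\}$, choose a $u$-$S$-injective $u$-$S$-precover $\phi\colon P\to\bigoplus_iE_i$ with respect to some $s$; since each $E_i$ is injective, hence $u$-$S$-injective, each injection $E_i\hookrightarrow\bigoplus_iE_i$ lifts through $\phi$ after multiplication by $s$, and assembling the lifts gives $\Psi\colon\bigoplus_iE_i\to P$ with $\phi\Psi=s\,\Id$; thus $0\to\bigoplus_iE_i\xrightarrow{\Psi}P\to\Coker\Psi\to0$ is a $u$-$S$-split $u$-$S$-short exact sequence with $P$ $u$-$S$-injective, and $\bigoplus_iE_i$ is $u$-$S$-injective by Lemma~\ref{ussplit-usinj}; in particular $(4)\Rightarrow(1)$. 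Under the extra hypothesis, for $(3)\Rightarrow(4)$ I would imitate the construction of $(3)\Rightarrow(4)$ in Theorem~\ref{usflatpreenvelope}, now collecting homomorphisms into $M$ rather than out of it: using the injective analogue of the cardinality bound \cite[Lemma 5.3.12]{EJ11}, fix a set $\mathcal{T}$ of injective modules of bounded cardinality, form $\theta:=\bigoplus_{(C,h)}C\to M$ over all pairs $(C,h)$ with $C\in\mathcal{T}$ and $h\in\Hom_R(C,M)$ — whose source is $u$-$S$-injective by $(3)$ — and verify, using the extra hypothesis to replace an arbitrary $u$-$S$-injective test module by an injective one, that $\theta$ is a $u$-$S$-injective $u$-$S$-precover of $M$. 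The last verification is the step I expect to be the main obstacle: in the precover (as opposed to preenvelope) setting one essentially needs a Matlis-type control on injective modules, which is not automatic over a ring that is only uniformly $S$-Noetherian.
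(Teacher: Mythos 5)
Your proposal diverges from the paper most significantly in $(1)\Rightarrow(2)$, and that is where it breaks. The paper argues by duality: since a uniformly $S$-Noetherian ring is uniformly $S$-coherent, each $(E_i)^+$ is $u$-$S$-flat with a common constant, so $(\bigoplus_i E_i)^+\cong\prod_i (E_i)^+$ is $u$-$S$-flat by Theorem~\ref{usflatpreenvelope}, and then $\bigoplus_i E_i$ is $u$-$S$-injective by \cite[Theorem 2.14]{Bourzri26}. You instead pass through ``$u$-$S$-absolutely pure $\Rightarrow$ $u$-$S$-injective over a uniformly $S$-Noetherian ring,'' and your proof of that step contains a false claim. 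From $t\,\Ext_R^1(R\bar y,M)=0$ you get $f\colon M+Ry\to M$ with $f|_M=t\,\Id_M$, but this $f$ is injective only if $M$ has no $t$-torsion; regularity of $t$ in $R$ gives no control over torsion in $M$. Concretely, take $R=\mathbb{Z}$, $S=\{p^n\}$, $M=\mathbb{Z}/p\mathbb{Z}$: then $M$ is $u$-$S$-absolutely pure (it is $u$-$S$-torsion), $R$ is Noetherian hence uniformly $S$-Noetherian, yet $E(M)=\mathbb{Z}(p^\infty)$ is divisible, so $tE(M)=E(M)\not\subseteq M$ for every $t\in S$. So your conclusion ``$tE(M)\subseteq M$, hence $M$ is $u$-$S$-isomorphic to $E(M)$'' is simply wrong; it would in particular settle affirmatively the very hypothesis (``every $u$-$S$-injective module is $u$-$S$-isomorphic to an injective one'') that the theorem has to assume. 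The weaker statement you actually need ($M$ is $u$-$S$-injective) may still hold via a uniform Baer-type criterion, but that is a different argument and you have not supplied it.

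The rest is in better shape. Your $(4)\Rightarrow(3)$ coincides with the paper's (assemble the liftings into a $u$-$S$-splitting and apply Lemma~\ref{ussplit-usinj}). Your $(3)\Rightarrow(1)$ is a genuinely different and complete argument: the paper merely cites \cite[Theorem 4.10]{CKQWZ}, whereas you reprove it by the Bass--Papp-style chain construction applied to the single module $\bigoplus_I E(R/I)$ to extract a uniform $s_0$; this is correct and self-contained. For $(3)\Rightarrow(4)$ you describe the right construction but stop short: the paper carries it out by taking the set of injective envelopes of cyclic modules, forming $\bigoplus_i f_i\colon\bigoplus_i E_i^{(X_i)}\to M$, and using the extra hypothesis to replace a $u$-$S$-injective test module by an injective one via \cite[Lemma 1.2]{ZQusproj}. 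The Matlis-type factorization worry you raise is legitimate (the paper's own justification that every map from an injective module factors through $\bigoplus_i f_i$ is terse), but you cannot leave the implication unproved. As submitted, the proposal does not establish $(1)\Rightarrow(2)$ or $(3)\Rightarrow(4)$.
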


 \begin{proof}
 $(2)\Rightarrow  (3)$ Trivial.  $(3)\Rightarrow  (1)$ It follows by \cite[Theorem 4.10]{CKQWZ}.

 	$(1)\Rightarrow(2)$ Let $R$ be a  $u$-$S$-Noetherian ring. Then  it is also a $u$-$S$-coherent ring by \cite[Proposition 3.10]{Zuscoh-24}. Let $\{E_i\}_{i\in \Lambda}$ be a family of $u$-$S$-injective modules with respect to $s$.  Then for each $i\in \Lambda$, $E_i^+$ is $u$-$S$-flat with respect to some fixed $s_1\in S$ by the proof of \cite[Theorem 4.7]{Zuscoh-24}. Consequently, $\prodi E_i^+\cong (\bigoplus\limits_{i\in \Lambda}E_i)^+$ is $u$-$S$-flat by Theorem \ref{usflatpreenvelope}. Hence, $\bigoplus\limits_{i\in \Lambda}E_i$ is $u$-$S$-injective by \cite[Theorem 2.14]{Bourzri26}.

 $(4)\Rightarrow (3)$: Let $\{E_i\mid i\in\Lambda\}$ be a family of injective $R$-modules. Then, by assumption, $\bigoplus\limits_{i\in \Lambda}E_i$ has a $u$-$S$-injective $u$-$S$-precover $f:E\rightarrow \bigoplus\limits_{i\in \Lambda}E_i$. Hence there exists $s_1\in S$ such that for any $i\in\Lambda$ we have the following commutative diagram:
 $$\xymatrix{
  E_i\ar[r]^{\times s_1}\ar[d]^{g_i} & E_i \ar[d]^{e_i} \\
 	E\ar[r]^{f} &\bigoplus\limits_{i\in \Lambda}E_i,}$$
 Consequently, $s_1\Id_{\bigoplus\limits_{i\in \Lambda}E_i}= f\circ(\bigoplus\limits_{i\in \Lambda} g_i)$. So  $$0\rightarrow \Ker(f)\rightarrow E\rightarrow \bigoplus\limits_{i\in \Lambda}E_i\rightarrow 0$$	is a $u$-$S$-split  short $u$-$S$-exact sequence.  It follows by Lemma \ref{ussplit-usinj} that $\bigoplus\limits_{i\in \Lambda}E_i$ is $u$-$S$-injective.

 Now, assume that every $u$-$S$-injective module is $u$-$S$-isomorphic to an injective module.

 $(3)\Rightarrow (4)$ Let $M$ be an $R$-module. Let $\{E_i\mid i\in\Lambda\}$ be the set of  injective envelopes of all cyclic $R$-modules,  $X_i=\Hom_R(E_i,M)$, and $f_i:E_i^{(X_i)}\rightarrow M$ be the evaluation map $(\phi_g)_{g\in X_i}\mapsto \sum\limits_{g\in X_i}g(\phi_g)$. Then any $R$-homomorphism $E'\rightarrow M$ with $E'$ injective factor through the natural $R$-homomorphism  $\bigoplus\limits_{i\in \Lambda}f_i$.  We claim that  $\bigoplus\limits_{i\in \Lambda}f_i:\bigoplus\limits_{i\in \Lambda}E_i^{(X_i)}\rightarrow M$ is a $u$-$S$-injective $u$-$S$-precover. Indeed, by assumption, $\bigoplus\limits_{i\in \Lambda}E_i^{(X_i)}$ is a $u$-$S$-injective $R$-module.   Let $f:E\rightarrow M$ be an $R$-homomorphism with $E$ $u$-$S$-injective. Then  there is a $u$-$S$-isomorphism $t':E\rightarrow E'$ with $E'$  injective  by assumption. Hence there is $s'\in S$  and a $u$-$S$-isomorphism $t:E'\rightarrow E$ such that $t \circ t' = s' \Id_E$ and $t' \circ t = s' \Id_{E'}$ by \cite[Lemma 1.2]{ZQusproj}. Then there exists an $R$-homomorphism $g:E'\rightarrow \bigoplus\limits_{i\in \Lambda}E_i^{(X_i)}$ such that $\bigoplus\limits_{i\in \Lambda}f_i\circ g=f\circ t$:
 $$\xymatrix{
 	E'\ar[r]^{t}\ar[d]^{g} & E\ar@/^/[l]^{t'}\ar[d]^{f} \\
 	\bigoplus\limits_{i\in \Lambda}E_i^{(X_i)}\ar[r]^{\bigoplus\limits_{i\in \Lambda}f_i} &M.}$$
 Hence, $\bigoplus\limits_{i\in \Lambda}f_i\circ g\circ t'=f\circ t\circ t'=s'f$. Consequently, $\bigoplus\limits_{i\in \Lambda}f_i$ is a $u$-$S$-injecive  $u$-$S$-precover.	
 \end{proof}

\begin{remark} The equivalence of $(1)$ and $(3)$ in Theorem \ref{s-injective-ext}  can be found in \cite[Theorem 4.10]{CKQWZ}.  As to the equivalence of $(1)$ and $(4)$, we don't known whether every $u$-$S$-injective module is $u$-$S$-isomorphic to an injective module in general. It is a natural question whether the following statement holds?
	
A ring $R$ is  $u$-$S$-Noetherian if and only if every $R$-module has a $u$-$S$-injective $u$-$S$-cover.
\end{remark}

\begin{acknowledgement} The authors greatly appreciate the reviewer's comments and revision suggestions on this article.
\end{acknowledgement}

\end{document}